\newtheorem{theorem}{Theorem}[section]
\newtheorem{corollary}[theorem]{Corollary} 
\newtheorem{lemma}[theorem]{Lemma} 
\theoremstyle{definition}
\newtheorem{definition}[theorem]{Definition} 
\newtheorem{prop}[theorem]{Proposition} 
\newtheorem{remark}[theorem]{Remark} 
\newcommand{\sP}{\mathcal P}
\newcommand{\sQ}{\mathcal Q}
\newcommand{\sR}{\mathcal R}
\title{Optimal Stopping of a Brownian Excursion and an $\alpha-$dimensional Bessel Bridge}
\author{David Hobson, Jingfei Liu}
\date{\today}
\begin{document}

\maketitle

\begin{abstract}

We study the optimal stopping of an $\alpha$-dimensional Bessel bridge for the payoff $\phi(x)=x^n$, where
$\alpha,n>0$. As a
special case we consider the Brownian excursion with the identity function as the payoff ($\alpha=3,n=1$). For
the Brownian
excursion we can give an explicit solution but in the general case we provide a complete solution via a power
series expansion.

\end{abstract}

\section{Introduction}
In this paper, we start by studying the optimal stopping problem for a Brownian excursion. Let $X =
\{X_{t}\}_{0\leq t\leq 1}$ be a
scaled Brownian excursion process started at zero, or equivalently a Brownian motion conditioned to be positive on time-period $(0,1)$ and to return
to zero for the first time at time 1.
We find $\sup_{\tau \leq 1} \mathbb{E}[X_\tau]$ and the optimal stopping time $\tau^*$.

A Brownian excursion process is a 3-dimensional Bessel bridge. In the second part of the paper, we
generalize the above
optimal stopping problem to consider an $\alpha-$dimensional Bessel bridge as the underlying stochastic process,
where $\alpha > 0$,
and a payoff function which is a positive power, $\phi(x)=x^n$ where $n>0$. We look for a value function for the
process started at
$(t,x) \in [0,1) \times [0,\infty)$
\begin{equation*}
    V(t,x)=\sup_{t\leq\tau\leq 1}\mathbb{E}_{t,x}(X_{\tau}^{n}),
\end{equation*}
and the associated optimal stopping time.

Related optimal stopping problems have been studied extensively in the literature. Shepp~\cite{shepp1969explicit}
first introduced the optimal stopping problem where the underlying stochastic process is a Brownian bridge. Shepp derived an
explicit solution, and solved the problem by transforming the Brownian bridge into a time-changed Brownian motion. Ekstrom and
Wanntorp~\cite{Ekstrom_Wanntorp} solved the same problem using a more direct approach and extended the result to include different
payoff functions. Later, Gorgens~\cite{görgens2014optimalstoppingalphabrownianbridge} extended the analysis to an $\alpha-$Brownian bridge (i.e a diffusion process $X$ satisfying the SDE $dX_t = dB_t - \frac{\alpha X_t}{1-t}dt$). The study of optimal stopping with Brownian bridges can be applied in finance, as Brownian bridges capture the stock pinning effect. With this in mind Baurdoux et al.\cite{Baurdoux_Chen_Surya_Yamazaki_2015} extended the original optimal stopping problem to an optimal double stopping problem driven by a Brownian bridge; their goal is to find a pair of stopping times that maximizes the expected spread between the two payoffs. Their study provides a solution for situations where one aims to buy and sell an asset to maximize the spread before the price converges to a fixed value. D'Auria et al.\cite{d2020discounted} studied the discounted optimal stopping of a Brownian bridge, as well as its application in American options under pinning. De Angelis and Milazzo~\cite{de2020optimal} studied the problem for the exponential of the Brownian bridge. Finally (for our review), Glover
solved the optimal stopping problem
with an unknown pinning time using a Bayesian approach~\cite{glover2022optimally}.

In this paper, we consider an $\alpha-$dimensional Bessel bridge as the underlying stochastic process for the
optimal stopping
problem, and we choose the payoff function to be $X^{n}$. (Here $\alpha$ and $n$ are positive, but not necessarily integers.) We find an explicit solution for the case where $\alpha=3$
and $n=1$, and
closed form solutions for some other paramater values; however in the general case the solution is presented as a
power series
solution. To the best of our knowledge, this is the first time that Bessel bridges have been used as the driving
stochastic processes
for an optimal stopping problem.

The method we used to find the value function and the optimal boundary was introduced by Ekstrom and
Wanntorp\cite{Ekstrom_Wanntorp}.
We allow for an arbitrary starting point, and formulate a free boundary problem for the value function as well as
the optimal
stopping boundary. We then find a candidate solution for this free boundary problem. Finally, we verify that this
candidate
solution is indeed the true solution.

\section{Optimal stopping of a Brownian excursion}
\label{sec:BE}
Let $W=\{W_{t}\}_{t\geq 0}$ be a standard Brownian motion.
\begin{definition}\label{def_BE}
    A non-negative continuous process $X=\{X_{t}\}_{0\leq t \leq 1}$ is a (scaled) Brownian excursion process if
    it satisfies the
    SDE,
    \begin{equation}
    \label{eq:defBE}
        dX_{t}=\left(\frac{1}{X_{t}}-\frac{X_{t}}{1-t}\right)dt + dW_{t},  \qquad   0<t<1,
    \end{equation}
    subject to intial condition $X_{0}=0$. See Pitman~\cite{pitman1999sde} for more details.
\end{definition}

Note that the $\frac{1}{X_t}$ term in the drift prevents the process from hitting zero before time 1, and the
$\frac{-X_t}{1-t}$ term makes sure that it returns to zero at time $1$.
In this section, we are interested in the optimal stopping problem for $X=\{X_{s}\}_{t\leq s\leq 1}$, a Brownian
excursion started at $(t,X_{t})=(t,x)$, with the identity payoff. The associated value function is
\begin{equation}\label{value_def}
    V(t,x)=\sup_{t\leq \tau\leq 1}\mathbb{E}_{t,x}[X_{\tau}],
\end{equation}
and the goal is to find the optimal stopping time $\tau^{*}$ for which the supremum in \eqref{value_def} is attained. 
Given the Markovian structure of the problem we expect that the optimal stopping rule will be of threshold form:
$\tau^* = \tau_c$
where
\begin{equation*}
    \tau_c = \inf\{s\geq t: X_{s}\geq c(s)\},
\end{equation*}
where $c( \cdot )$ denotes the optimal stopping boundary. The region $\{ (t,x) \in (0,1) \times [0,c(t)) \}$ is called the continuation region, and $\{ (t,x) \in (0,1) \times [c(t),\infty)\}$ is called the
stopping region.

When $X_{t}$ is in the stopping region, i.e. $X_{t}\geq c(t)$, we expect $V(t,X_{t})=
X_{t}$. Thus, we only
need to find $V(t, X_{t})$ when $X_{t}$ is in the continuation region. Assuming that $V$ is sufficiently smooth,
we deduce from
It\^{o}'s formula and
\eqref{eq:defBE} that
\begin{equation*}
    \begin{split}
        dV(t, X_{t})=\left[V_{t}(t,X_{t}) +V_{x}(t,X_{t})\left(\frac{1}{X_{t}}-\frac{X_{t}}{1-t}\right)
        +\frac{1}{2}V_{xx}(t,
        X_{t})\right]dt+V_{x}(t,X_{t})dW_{t}.
    \end{split}
\end{equation*}
Since the value function evolves as a martingale in the continuation region $x<c(t)$ we obtain the HJB equation,
\begin{equation}
    V_{t}(t,x) +\left(\frac{1}{x}-\frac{x}{1-t}\right)V_{x}(t,x) + \frac{1}{2}V_{xx}(t, x) = 0  \label{PDE}
\end{equation}
with boundary conditions
\begin{align}
    V(t,c(t))&= c(t), \label{boundary_cond_1}\\
    V_{x}(t,c(t))&= 1, \label{boundary_cond_2}\\
    V(t,0)&<\infty. \label{boundary_cond_3}
\end{align}
Here \eqref{boundary_cond_1} and \eqref{boundary_cond_2} are the value matching condition and smooth fit condition
respectively and
\eqref{boundary_cond_3} ensures that the value function is finite when $x=0$.

We make the ansatz, $c(t)=C\sqrt{1-t}$ together with
\begin{equation}
\label{eq:Vansatz}
    V(t,x) = \sqrt{1-t}f\left(\frac{x}{\sqrt{1-t}}\right),
\end{equation}
where $C$ is a constant and $f:\mathbb{R}_+ \to \mathbb{R}_+$ a function to be determined. In this way, setting
$y=\frac{x}{\sqrt{1-t}}$, the problem given in \eqref{PDE}-\eqref{boundary_cond_3} is transformed into
\begin{equation}
\label{ODE}
    f''(y)+ \left( \frac{2}{y}-y \right)f'(y)-f(y)=0
\end{equation}
on $y<C$, with boundary conditions
\begin{align}
    f(C)&=C , \label{bond_cond_ode_1}\\
    f'(C)&=1 , \label{bond_cond_ode_2}
\end{align}
and such that $f(0)<\infty$. 
It is easily seen that $\frac{1}{y}$ is a solution to \eqref{ODE} and then that a second solution is given by
$\frac{1}{y}\int_{0}^{y}e^{\frac{t^{2}}{2}}dt$. Then the general solution to the ODE in \eqref{ODE} is
\begin{equation}
\label{eq:fdef}
    f(y)= \frac{A}{y}+ \frac{B}{y}\int_{0}^{y}e^{\frac{t^{2}}{2}}dt.
\end{equation}
The finiteness condition at zero implies $A=0$. From \eqref{bond_cond_ode_1} and
\eqref{bond_cond_ode_2} we find that coefficient $B$ satisfies
\begin{equation}
\label{eq:Bvalue}
    B = \frac{C^{2}}{\int_{0}^{C}e^{\frac{t^{2}}{2}}dt},
\end{equation}
where $C$ solves
\begin{equation}
\label{eq:Cdef}
    C = 2 e^{-\frac{C^2}{2}} \int_0^C e^{t^2/2} dt.
\end{equation}
Indeed $C$ is the the unique solution of \eqref{eq:Cdef} in $(0,\infty)$. To see this define $h:[0,\infty) \to \mathbb{R}$ by $h(c)= 2\int_{0}^{c}e^{\frac{t^{2}}{2}}dt-ce^{\frac{c^{2}}{2}}$. Note that $h'(c) = e^{\frac{c^{2}}{2}}(1-c^2)$ so that $h$ is increasing on $(0,1)$ and decreasing on $(1,\infty)$. Uniqueness of a positive root of $h$ in $(1,\infty)$ (and the fact the root is in $(1,2)$) follows from the fact that $h(0)=0$, $h(1)= 2\int_{0}^{1}e^{\frac{t^{2}}{2}}dt-\sqrt{e}> 2 - \sqrt{e}>0$, and, from the convexity of $e^{t^2/2}$, $h(2) < 2 \frac{1 +e^2}{2} - 2e^2 < 0$. Numerical results show that the root is given by $C = 1.50339538$.

The next result is technical and the proof is postponed until the appendix.

\begin{lemma}\label{lemma_f}
    Let $f:[0,C] \to [0,\infty)$ be given by \eqref{eq:fdef} with $A=0$ and $B$ given by \eqref{eq:Bvalue}.

    Then $f(0)=B$ and $f'(0)=0$. Moreover, $f$ is convex with $f(y)\geq y$ for all $y \in [0,C]$. 
\end{lemma}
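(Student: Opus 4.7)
The plan is to build everything from the identity $y f(y) = B g(y)$, where $g(y) := \int_0^y e^{t^2/2}\,dt$. Taylor expanding $g(y) = y + y^3/6 + O(y^5)$ gives
\[
f(y) = B\bigl(1 + y^2/6 + O(y^4)\bigr),
\]
so $f(0) = B$ and $f'(0) = 0$ are immediate (and in passing $f''(0) = B/3$). Differentiating $yf = Bg$ once and twice yields the cleaner identities
\[
f(y) + y f'(y) = B e^{y^2/2}, \qquad 2 f'(y) + y f''(y) = B y e^{y^2/2},
\]
which will drive the rest of the argument.

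For convexity on $(0,C]$, the second identity reads $y f''(y) = B y e^{y^2/2} - 2 f'(y)$; eliminating $f'$ via the first identity and multiplying by $y$ reduces $f''(y) \geq 0$ to the scalar inequality $2 f(y) \geq B e^{y^2/2}(2 - y^2)$, which by $f = Bg/y$ is equivalent to
\[
\psi(y) := 2 g(y) - y(2 - y^2) e^{y^2/2} \geq 0.
\]
I would check $\psi(0) = 0$ and differentiate: after collecting terms, $\psi'(y) = y^2(1 + y^2) e^{y^2/2} \geq 0$, establishing the inequality. Convexity at the endpoint $y = 0$ is covered by the Taylor expansion above.

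Finally, $f(y) \geq y$ on $[0,C]$ falls out for free. The tangent line to $f$ at $y = C$ has equation $T(y) = f(C) + f'(C)(y - C) = C + (y - C) = y$, and a convex function lies above each of its tangents. The only mildly non-obvious move is spotting the auxiliary function $\psi$; once written down, its derivative factors so cleanly that positivity is transparent, and the remaining claims are either direct Taylor computations or one-line consequences of convexity.
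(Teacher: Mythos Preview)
Your proof is correct and follows essentially the same route as the paper's: both reduce convexity to the scalar inequality $2\int_0^y e^{t^2/2}\,dt \geq y(2-y^2)e^{y^2/2}$ and verify it by differentiation (the paper compares integrands and integrates, you differentiate $\psi$ directly---same computation), and both deduce $f(y)\geq y$ from convexity together with $f(C)=C$, $f'(C)=1$. The only cosmetic differences are your use of the Taylor expansion in place of l'H\^opital for the values at $0$, and your working from the identity $yf=Bg$ rather than from the ODE \eqref{ODE}.
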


Putting together \eqref{eq:Vansatz} with \eqref{eq:fdef} for the choice $A=0$ and $B$ given by \eqref{eq:Bvalue}
gives a candidate value function $V^{*}(t,x)$
\begin{equation}\label{candidate_value_func}
    V^{*}(t,x)=
    \begin{cases}
    C^{2}\left(\int_{0}^{C}e^{\frac{t^{2}}{2}}dt\right)^{-1}\left(\frac{1-t}{x}\right)
    \int_{0}^{\frac{x}{\sqrt{1-t}}}e^{\frac{t^{2}}{2}} dt
    &\text{if } x<c(t) \\
      x   &\text{if } x\geq c(t),
    \end{cases}
\end{equation}
where $c(t)= C\sqrt{1-t}$ and $C$ solves \eqref{eq:Cdef}. The next step is to show that the candidate value function $V^{*}(t,x)$ is indeed the
true solution to the
optimal stopping problem introduced in \eqref{value_def}.

\begin{theorem}
\label{thm:BE}
    The candidate value function $V^{*}(t,x)$ given in \eqref{candidate_value_func} coincides with the value
    function defined in
    \eqref{value_def}. Moreover, the associated stopping time $\tau^*$ given by
    \begin{equation*}
        \tau^{*}=\inf\{s\geq t: X_{s}\geq c(s)\},
    \end{equation*}
    where $c(s)=C\sqrt{1-s}$ and $C$ is the unique solution to \eqref{eq:Cdef}, is optimal.
\end{theorem}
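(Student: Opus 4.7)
The plan is a standard verification argument, proving $V^*(t,x) = V(t,x)$ via two opposite inequalities and simultaneously identifying $\tau^*$ as optimal. For the upper bound $V^*(t,x) \geq V(t,x)$, the goal is to establish that for every stopping time $\tau$ with $t \leq \tau \leq 1$,
\begin{equation*}
\mathbb{E}_{t,x}[X_\tau] \leq \mathbb{E}_{t,x}[V^*(\tau, X_\tau)] \leq V^*(t,x).
\end{equation*}
The first inequality uses the pathwise dominance $V^*(s,y) \geq y$, which follows from $f(y) \geq y$ on $[0,C]$ (Lemma~\ref{lemma_f}) on the continuation region and holds trivially on the stopping region; the second uses the supermartingale property of $V^*(s \wedge \tau, X_{s \wedge \tau})$.

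The supermartingale property comes from It\^o's formula applied to $V^*(s, X_s)$. In the continuation region $V^*$ satisfies the HJB equation \eqref{PDE} by construction, so the drift vanishes. In the stopping region $V^*(t,x) = x$ and the drift reduces to $\frac{1}{x} - \frac{x}{1-t}$; this is strictly negative because $x \geq c(t) = C\sqrt{1-t}$ and $C > 1$ (the positive root of $h$ was shown to lie in $(1,2)$ in the argument preceding Lemma~\ref{lemma_f}). The smooth-fit condition \eqref{boundary_cond_2}, together with the convexity of $V^*$ in $x$ inherited from Lemma~\ref{lemma_f}, ensures that no local-time contribution arises at the free boundary $x = c(t)$, so a generalized It\^o (Meyer--It\^o) argument gives that $V^*(s \wedge \tau, X_{s \wedge \tau})$ is a local supermartingale; localization then upgrades this to a genuine supermartingale.

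For the reverse inequality, apply the same expansion stopped at $\tau^*$. Along $(X_{s \wedge \tau^*})$ the process remains in the closed continuation region where the generator of $V^*$ vanishes, so $V^*(s \wedge \tau^*, X_{s \wedge \tau^*})$ is a local martingale. Boundedness of $f$ on $[0,C]$ gives $V^*(s, X_s) \leq \sqrt{1-s}\,\sup_{[0,C]} f$ along the stopped path, so the local martingale is bounded and hence a true martingale. Letting $s \uparrow 1$, the value-matching condition \eqref{boundary_cond_1} gives $V^*(\tau^*, X_{\tau^*}) = X_{\tau^*}$ on $\{\tau^* < 1\}$, while on $\{\tau^* = 1\}$ both $V^*$ and $X$ vanish; hence $V^*(t,x) = \mathbb{E}_{t,x}[X_{\tau^*}]$, establishing optimality.

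The main technical obstacle will be the double singularity of the problem: the drift $1/X_s$ blows up as $X_s \to 0$, and the ansatz $V^*$ itself degenerates as $t \to 1$. I expect to handle both via a localizing sequence $\tau_n = \tau \wedge (1 - 1/n) \wedge \inf\{s \geq t : X_s \leq 1/n\}$, on each of whose intervals the It\^o expansion is justified and $\int V^*_x\, dW$ is a true martingale, followed by a dominated-convergence passage to the limit, using the almost-sure continuity $X_s \to 0$ as $s \to 1$ together with the bound $V^*(s, X_s) = O(\sqrt{1-s})$ on the continuation region to control the limiting terms.
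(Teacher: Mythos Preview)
Your verification strategy matches the paper's exactly: Itô on $V^*(s,X_s)$, drift zero in the continuation region and strictly negative in the stopping region (via $C>1$), the inequality $V^*\ge x$ from Lemma~\ref{lemma_f}, and then the martingale property up to $\tau^*$ for the reverse bound. The only substantive difference is in how you discharge the ``local to true'' step. You propose a localizing sequence $\tau_n$ cutting off near $X=0$ and $t=1$, followed by a limiting argument; the paper instead observes that $V^*_x(t,x)=f'\!\left(\frac{x}{\sqrt{1-t}}\right)$ and that, by the convexity of $f$ in Lemma~\ref{lemma_f} together with $f'(0)=0$, $f'(C)=1$, one has $f'\in[0,1]$ on $[0,C]$. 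Thus $V^*_x$ is globally bounded, $\int V^*_x\,dW$ is a true martingale outright, and no localization is needed at all; the same bound handles the martingale up to $\tau^*$. Your route is correct (Fatou suffices for the limit, since $V^*\ge 0$), but the paper's shortcut is worth knowing: it removes precisely the ``double singularity'' you flag as the main obstacle.
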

\begin{proof}
   Fix $t_0 \in [0,1)$.
   We show that $V^{*}(t_0,x)=V(t_0,x)$ by first proving $V^{*}(t_0,x)\geq V(t_0,x)$, and then proving the reverse inequality.

   For $t \geq t_0$ let $M = \{ M_t \}_{t \geq t_0}$ be given by $M_{t}=V^{*}(t,X_{t})$. 
   It\^{o}'s formula gives us
   \begin{equation*}
       dM_{t}=V^{*}_{t}(t,X_{t})dt+V_{x}^{*}(t,X_{t})dX_{t}+\frac{1}{2}V_{xx}^{*}(t,X_{t})d\langle X\rangle_{t}.
   \end{equation*}
   Therefore, recalling \eqref{eq:defBE},
\begin{equation*}
    dM_{t}=\left[V_{t}^{*}(t,X_{t})+\left(\frac{1}{X_{t}}-\frac{X_{t}}{1-t}\right)V_{x}^{*}(t,X_{t})
    +\frac{1}{2}V_{xx}^{*}(t,X_{t})\right]dt+V_{x}^{*}(t,X_{t})dW_{t},
\end{equation*}
and using \eqref{eq:Vansatz} which converts \eqref{PDE} into \eqref{ODE} and vice-versa,
since $f$ solves \eqref{ODE} when $X$ is in the continuation region, i.e. $X_{t}<c(t)$, the drift term vanishes.
In contrast, when $X$ is in the stopping region, i.e. $X_t\geq c(t)$, we have that $V^{*}(t,x)=x$, and the drift
term simplifies. Putting the cases together,
\begin{equation}\label{supermartingale_eq}
    dM_{t}= g(t,X_t)\mathbb{I}\{X_{t}\geq c(t)\}dt +V_{x}^{*}(t,X_{t})dW_{t}
\end{equation}
where
\begin{equation*}
    g(t,x)= x^{-1} \left( 1 -\frac{x^2}{1-t} \right).
\end{equation*}
Note that for each $t \in [0,1)$, $g(t,\cdot)$ is decreasing in $x$ and therefore, for every $x\geq c(t)$,
$g(t,x)\leq g(t,c(t))$.
Moreover,
since $C>1$,
\begin{equation*}
    g(t,c(t))= g(t,C\sqrt{1-t})=\left(\frac{1}{C}-C\right)\sqrt{1-t}<0.
\end{equation*}
It follows that $\int_{t_0}^\cdot \left(\frac{1}{X_{t}}-\frac{X_{t}}{1-t}\right)\mathbb{I}\{X_{t}\geq c(t)\}dt$ is a
decreasing process.

The process $\int_{t_0}^\cdot V^{*}_{x}dW_{t}$ is a local martingale: we want to argue that $\int_{t_0}^{\cdot}
V^{*}_{x}(s,X_s)dW_{s}$ is a martingale. This will follow if $V^*_x$ is bounded, and in the stopping region this is immediate since $V^*_x \equiv 1$.
In the continuation region we have $V^{*}(t,x) = \sqrt{1-t}f\left(\frac{x}{\sqrt{1-t}}\right)$ where
$    f(y)= \frac{B}{y}\int_{0}^{y}e^{\frac{t^{2}}{2}}dt $
and $B$ and $C$ are given in \eqref{eq:Bvalue} and \eqref{eq:Cdef}. Then, $V^{*}_{x}(t,x) = f' \left(\frac{x}{\sqrt{1-t}}\right)$. From Lemma~\ref{lemma_f} we conclude that $f$ is a convex function, and thus $f' \in [f'(0),f'(C)]=[0,1]$ on $[0,C]$. 
Hence $V_{x}^{*}=f'$ is bounded, and $\int_{t_0}^\cdot V_{x}^{*}(s,x)dW_{s}$ is a martingale.

Putting together the results of the last paragraph, $M=\{M_{t}\}_{t\geq t_0}$ is a supermartingale.
Moreover, $V^*(t,x) \geq x$, this follows by definition on $x \geq c(t)$ and from Lemma~\ref{lemma_f} on the continuation region. 
Then $\mathbb{E}_{t_0,x}[X_\tau] \leq \mathbb{E}_{t_0,x}[V^*(\tau,X_\tau)]$. Further, by the Optional Stopping Theorem,
$\mathbb{E}_{t_0,x}[V^{*}(\tau,X_{\tau})]\leq V^{*}(t_0,x)$ and then for any stopping time $\tau$ taking values in $[t_0,1]$, 
\( 
    \mathbb{E}_{t_0,x}[X_{\tau}]\leq \mathbb{E}_{t_0,x}[V^{*}(\tau,X_{\tau})]\leq V^{*}(t_0,x).
\) 
It follows that
\begin{equation}
\label{eq:upperbd}
    V(t_0,x)=\sup_{t_0 \leq \tau \leq 1}\mathbb{E}_{t_0,x}[X_{\tau}]\leq V^{*}(t_0,x).
\end{equation}

Now we show the reverse inequality $V(t_0,x) \geq V^*(t_0,x)$. If $X_{t_0} \geq c(t_0)$ then taking $\tau = t_0$ we have
$V(t_0,x) \geq x = V^*(t_0,x)$. It remains to consider the case $X_{t_0} < c(t_0)$.
Set $\tau^{*} = \inf\{s\geq t_0: X_{s}\geq c(s)\}$. It follows that
$X_{t\wedge\tau^{*}}\leq c(t)$ almost surely. Then, from \eqref{supermartingale_eq} we conclude that $V^{*}(t
\wedge \tau^*,
X_{t\wedge\tau^{*}})$ is a martingale. Using the Optional Stopping theorem again, we have
$$V^{*}(t_0,x)=\mathbb{E}_{t_0,x}[V^{*}(\tau^{*},X_{\tau^{*}})]=\mathbb{E}_{t,x}[X_{\tau^{*}}]$$
where the second equality comes from the fact that $V^{*}(t,c(t))=c(t)$.
Hence, 
\begin{equation*}
    V(t_0,x)=\sup_{t_0 \leq \tau \leq 1}\mathbb{E}_{t_0,x}[X_{\tau}]\geq \mathbb{E}_{t_0,x}[X_{\tau^{*}}]= V^{*}(t_0,x) 
\end{equation*}
Combining this result with \eqref{eq:upperbd} we have  $V(t_0,x) = V^*(t_0,x)$ and $\tau^*$ is optimal.
\end{proof}

\section{Optimal stopping of an $\alpha-$dimensional Bessel bridge}
A Brownian excursion can also be viewed as a 3-dimensional Bessel bridge. In this section
we are interested in extending the problem of Section~\ref{sec:BE} to allow for an $\alpha-$dimensional Bessel bridge as the
underlying stochastic process and to include more general objective functions of
power law form.

At least when $\alpha$ is an integer, the $\alpha$-dimensional Bessel process corresponds to the radial part of $\alpha$-dimensional Brownian motion. 
For $\alpha\geq 2$, $\alpha$-dimensional Brownian motion started at the origin never returns to zero. Correspondingly, an $\alpha$-dimensional Bessel process started at the origin never hits zero after time zero (provided $\alpha \geq 2$). 
Similarly, when $\alpha\geq 2$ an $\alpha$-dimensional Bessel bridge is strictly positive on time-interval $(0,1)$. However, for $\alpha \in (0,2)$, both an $\alpha$-dimensional Bessel process and an $\alpha$-dimensional Bessel bridge return to zero countably many times in $(0,1)$, as well as at time 1 for the Bessel bridge. This can cause complications in the study of the Bessel process, 
for example, when $0 < \alpha < 1$ the $\alpha$-dimensional Bessel process does not have a representation via a stochastic differential equation. Instead we work with the squared-Bessel process, and the squared Bessel bridge. Bessel processes and squared Bessel processes are discussed in detail in Revuz and Yor~\cite{RevuzYor}, see Chapter 11, and especially Exercise 3.11.

\begin{definition}
\label{def:BesQ}
    Suppose $\alpha>0$.
    A non-negative continuous process $Q=\{Q_{t}\}_{0\leq t\leq 1}$ is an $\alpha-$dimensional squared Bessel bridge
    process if it satisfies
    the SDE,
    \begin{equation*}
        dQ_{t}=\left(\alpha - \frac{2Q_{t}}{1-t} \right)dt+ 2 \sqrt{Q}_t dW_{t}, \qquad   0<t<1,
    \end{equation*}
    subject to $Q_0=0$.
\end{definition}

\begin{remark}
Given a process $Q$ as in Definition~\ref{def:BesQ} we can define an $\alpha$-dimensional Bessel bridge $X= (X_t)_{0 \leq t \leq 1}$ by $X_t = \sqrt{Q_t}$. If $\alpha \geq 2$ so that $Q>0$ on $(0,1)$ we can apply It\^{o}'s formula to deduce that $X$ solves the SDE $dX_t = \left( \frac{\alpha-1}{2X_t} - \frac{X_t}{1-t} \right) dt + dW_t$ and when $\alpha=3$ we recognise the Brownian excursion of Definition~\ref{def_BE}. When $\alpha \geq 2$ we can continue to work with this representation, but when $\alpha \in [1,2)$ we need to deal with the times when $X=0$, and when $\alpha \in (0,1)$, $X$ cannot be written as the solution of an SDE, hence our re-specification of the problem via the squared Bessel process.
\end{remark}

Our goal is to investigate the optimal stopping problem for the $\alpha$-dimensional Bessel bridge with power-law payoff, $\phi(x)=x^n$. 
For $\alpha \in (0,\infty)$ and $n \in (0,\infty)$ define $V(t,x)=\sup_{t\leq\tau\leq 1}\mathbb{E}_{t,x}[X_{\tau}^{n}]$. Then $V(t,x)=U(t,x^2)$ where 
\begin{equation}\label{value_general}
    U(t,q)=\sup_{t\leq\tau\leq 1}\mathbb{E}_{t,q}[Q_{\tau}^{n/2}],
\end{equation}
where $Q$ is the squared Bessel bridge introduced in Definition~\ref{def:BesQ}.
As in the last section, we aim to find the optimal stopping time $\tau^{*}$ in which the supremum in \eqref{value_general} is attained.
For the problem started at time $t_0 \in (0,1)$ we expect the optimal stopping rule to be
of threshold form:
\begin{equation*}
    \tau^{*}=\inf\{s\geq t_0: X_{s}\geq \sqrt{z(s)} \} = \inf\{s\geq t_0: Q_{s}\geq z(s)\},
\end{equation*}
where $z(\cdot)$ is the optimal boundary (for the squared Bessel bridge).
Following the Brownian excursion case,
in the continuation region  
$\mathcal{C}_Q =\{ (t,q) \subseteq (0,1) \times [0,z(t)) \}$ we have
\begin{equation*}
        dU(t, Q_{t})=\left[U_{t}(t,Q_{t}) +U_{q}(t,U_{t})\left(\alpha-\frac{2Q_{t}}{1-t}\right)
        +2 Q_t U_{qq}(t, Q_{t})\right]dt+ 2 \sqrt{Q_t} U_{q}(t,X_{t})dW_{t},
\end{equation*}
and on ${\mathcal C}_Q$ 
we expect that $U$ solves the pde
\begin{equation}
    U_{t}(t,q) +\left(\alpha-\frac{2q}{1-t}\right)U_{q}(t,q) + 2q U_{qq}(t, q) = 0
    \label{PDE_general}
\end{equation}
subject to $U(t,z(t))= z(t)^{n/2}$
and $U_{q}(t,z(t))= \frac{n}{2} z(t)^{\frac{n}{2}-1}$. 

We make the ansatz $z(t)=Z(1-t)$ and $V(t,x)=(1-t)^{\frac{n}{2}}f\left(\frac{x}{\sqrt{1-t}}\right)$ or equivalently 
\begin{equation}
    \label{eq:Udef}
U(t,q) = (1-t)^{\frac{n}{2}}g\left(\frac{q}{1-t}\right)
\end{equation}
for $Z$ a
constant and $f,g$ functions to be
determined such that $f(x)=g(x^2)$. On $[0,\sqrt{Z}]$ we have that $f$ solves the ODE
\( yf''(y)+(\alpha-1-y^{2})f'(y)-nyf(y)=0, \) 
subject to 
\( f(\sqrt{Z}) = Z^{n/2}, 
    f'(\sqrt{Z})=nZ^{(n-1)/2},\) 
which is equivalent to
\begin{equation}
    4yg''(y)+ 2(\alpha-y)g'(y)-ng(y)=0, \label{ode_ge}
\end{equation}
subject to $g(Z)  = Z^{n/2}$ and $g'(Z)  = \frac{n}{2}Z^{\frac{n}{2}-1}$.

In general there does not seem to be a closed form solution to \eqref{ode_ge} so we look for a solution in the
form of a power series
expansion. However, as well as the Brownian excursion with linear payoff $(\alpha=3,n=1)$ there are other cases where we have explicit solutions and before moving on to the general case we record those here.

Motivated by the Brownian excursion example, consider $G(y) = y^\theta$. Then $G$ solves \eqref{ode_ge} provided 
$2\theta(\theta-1) + \alpha \theta = 0$ and $-2 \theta - n=0$. Ruling out the case $\theta=0$ we find that $G$ is a solution provided $n = - 2 \theta = \alpha-2$, so that there is an explicit solution if $n=\alpha-2$ and then the solution is given by $G(y) = y^{-\frac{n}{2}}$. This solution explodes at 0, but we can find another solution $H$ of the form
\[ H(y) = y^{-\frac{n}{2}} \int_0^y e^{\frac{s}{2}} s^{\frac{n}{2}-1} \frac{ds}{2} = y^{1-\frac{\alpha}{2}} \int_0^y e^{\frac{s}{2}} s^{\frac{\alpha}{2}-2} \frac{ds}{2};    \qquad n = \alpha - 2 \]
for which $H$ and $H'$ are strictly positive and bounded at zero. Choosing an appropriate multiple of $H$ we can construct a solution to \eqref{ode_ge} which can be used as the basis for the candidate solution of the optimal stopping problem.

Another example from the literature where an explicit solution is known is the case of a reflecting Brownian bridge ($\alpha=1,n=1$), 
see Ekstr\"om and Wanntorp~\cite {Ekstrom_Wanntorp}. In this case, $G(y)= e^{{y}/{2}}$ solves \eqref{ode_ge}. Motivated by this example, consider $G(y) = y^\theta e^{y/2}$. We find that $G$ solves \eqref{ode_ge} provided 
either $(\theta=0,\alpha=n)$ or $(n=2,\alpha=2 - 2\theta)$. In particular, when $\alpha=n$ we find that the candidate value function is given by $U(t,q)=(1-t)^{n/2} n^{n/2}e^{\frac{(q-\sqrt{n})}{2(1-t)}}$ and $Z=\sqrt{n}$. (For the Brownian bridge we recover that the candidate value function is 
$V^*(t,x) = (1-t)^{1/2}e^{\frac{x^2-1}{2(1-t)}}$ and the candidate optimal stopping rule is $\tau^* = \inf \{ t \geq t_0 : X_t \geq \sqrt{1-t} \}$.)
Further, when $n=2$ we find $G(y)= y^{1 - \frac{\alpha}{2}} e^{y/2}$ is a solution to \eqref{ode_ge} and the candidate value function (at least when $\alpha>2$) is built from the second solution to \eqref{ode_ge}
\[ H(y) = E y^{1- \alpha/2} e^{y/2} \int_0^y e^{-v/2} v^{-2 + \frac{\alpha}{2}} \frac{dv}{2} \]
for some constant $E>0$.

Returning to the general case we look for a solution in power series form.
Let $\psi(y)=\sum_{k=0}^{\infty}A_{k}y^{k}$ where $A_0=1$ so that $\psi(0)=1$. Then we have $\psi'(y)=\sum_{k=1}^{\infty}kA_{k}y^{k-1}$ and $\psi''(y)=\sum_{k=2}^{\infty}k(k-1)A_{k}y^{k-2}$. Substituting these back into \eqref{ode_ge}, we obtain a recursive equation for the coefficients $(A_k)_{k \geq 0}$:
\begin{equation}
A_{k+1}=\frac{(2k+n)}{2(k+1)(2k+\alpha)} A_k; \qquad A_0=1, \label{powerseries_cond_1}
\end{equation}
which has solution
\begin{equation}
\label{eq:defCk}
   A_{k}=\frac{\Gamma(\frac{\alpha}{2})}{\Gamma(\frac{n}{2})}\frac{\Gamma(k+\frac{n}{2})}{\Gamma(k+\frac{\alpha}{2})2^{k}k!}. 
\end{equation}
Given $\psi$ we can define a second solution $\psi_2$ to  \eqref{ode_ge} of the form 
\[ \psi_2(y) = \psi(y) \int_0^q \frac{ s^{-\alpha/2} e^{s/2} }{\psi(s)^{2}} \frac{ds}{2} \]
and the general solution to \eqref{ode_ge} is
\begin{equation}
\label{eq:generalg}
g(y) = E_1 \psi(y) + E_2 \psi_2(y). 
\end{equation}
Note that $\psi_2(y) \sim y^{1- \alpha/2}$ for small $y$. 
We want a solution for which $g'$ is bounded on $[0,Z]$ and $g''$ is finite at 0 and for which the boundary conditions hold at $Z$. Together, the first set of conditions implies that $E_2=0$ and the boundary conditions at $Z$ imply that $Z$ solves $Z \psi'(Z) = \frac{n}{2} \psi(Z)$.  
This is equivalent to $F_{\alpha,n} (Z)=0$ where
\begin{equation}\label{D_solu}
   F_{\alpha,n}(z) = \sum_{k=0}^{\infty} (2k-n)A_{k}z^{k}.
\end{equation}

The next result follows from Lemma~\ref{lem:Appuniqueroot} in the Appendix.
\begin{corollary}
\label{cor:B}
    $F_{\alpha,n}$ has a unique positive root $Z = Z_{\alpha,n}$. Moreover, $F_{\alpha,n}(z) < 0$ on $[0,Z_{\alpha,n})$.
\end{corollary}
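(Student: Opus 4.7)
My plan is to recast $F_{\alpha,n}$ in terms of the entire function $\psi(y)=\sum_{k\ge 0}A_k y^k$ whose coefficients are given in \eqref{eq:defCk}: from the series definition \eqref{D_solu} one reads
\[ F_{\alpha,n}(z) \;=\; 2z\,\psi'(z) - n\,\psi(z) \;=\; 2z^{\frac{n}{2}+1}\,\frac{d}{dz}\!\left(z^{-n/2}\psi(z)\right). \]
Since every $A_k$ is strictly positive, $\psi$ and $\psi'$ are strictly positive on $[0,\infty)$, so positive zeros of $F_{\alpha,n}$ are exactly the critical points of the strictly positive function $G(z):=z^{-n/2}\psi(z)$, and $\mathrm{sgn}\,F_{\alpha,n}=\mathrm{sgn}\,G'$ on $(0,\infty)$.

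First I would establish the boundary behavior. Directly from the series, $F_{\alpha,n}(0)=-nA_0=-n<0$, equivalently $G(z)\to+\infty$ as $z\to 0^+$. For $z\to\infty$, recognizing $\psi(y)=M(n/2,\alpha/2,y/2)$ as a rescaled confluent hypergeometric function and invoking the standard Kummer asymptotic $\psi(z)\sim \frac{\Gamma(\alpha/2)}{\Gamma(n/2)}\,e^{z/2}z^{(n-\alpha)/2}$ (or, alternatively, deducing $\psi'/\psi\to 1/2$ directly from the ODE \eqref{ode_ge}), one finds $F_{\alpha,n}(z)\to+\infty$ and $G(z)\to+\infty$. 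By the intermediate value theorem, $F_{\alpha,n}$ has at least one positive root.

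For uniqueness the crucial input is a clean derivative identity. Differentiating $F_{\alpha,n}=2z\psi'-n\psi$ and substituting $2z\psi''=-(\alpha-z)\psi'+\tfrac{n}{2}\psi$ from \eqref{ode_ge} yields
\[ F_{\alpha,n}'(z)\;=\;\bigl(z-(n+\alpha-2)\bigr)\,\psi'(z)+\tfrac{n}{2}\psi(z), \]
and at any positive root $z^*$, where $\psi'(z^*)=n\psi(z^*)/(2z^*)$, this collapses to
\[ F_{\alpha,n}'(z^*)\;=\;\frac{n\,\psi(z^*)}{2z^*}\bigl[\,2z^*-(n+\alpha-2)\,\bigr]. \]
So at any root with $z^*>(n+\alpha-2)/2$, $F_{\alpha,n}$ crosses zero transversely and increasingly. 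Since $G\to+\infty$ at both endpoints of $(0,\infty)$, a second positive root of $F_{\alpha,n}$ would force $G$ to have an interior local maximum, i.e.\ a critical point with $F_{\alpha,n}'\le 0$, and the display above then forces that critical point to lie in $(0,(n+\alpha-2)/2]$.

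The main obstacle is therefore to exclude positive roots from $(0,(n+\alpha-2)/2]$ in the regime $n+\alpha>2$ (the complementary regime being immediate). This is precisely what Lemma~\ref{lem:Appuniqueroot} in the appendix is designed to supply. The cleanest route I would pursue is to show directly that $F_{\alpha,n}(z)<0$ on that initial interval, either by splitting the series \eqref{D_solu} at $k=\lceil n/2\rceil$ and dominating the finitely many negative terms against the remainder, or by comparing the logarithmic derivative $\psi'/\psi$ with $n/(2z)$ via a Gronwall-type argument seeded at $\psi'(0)/\psi(0)=n/(2\alpha)$ and driven by the ODE \eqref{ode_ge}. Once uniqueness is in hand, the second assertion that $F_{\alpha,n}(z)<0$ on $[0,Z_{\alpha,n})$ is immediate from $F_{\alpha,n}(0)=-n<0$, continuity, and the fact that $Z_{\alpha,n}$ is the only sign change.
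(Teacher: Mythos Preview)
The paper's proof is a two-line application of Lemma~\ref{lem:Appuniqueroot}: write $F_{\alpha,n}(z)=\sum_{k\ge0}a_kb_kz^k$ with $a_k=A_k>0$ and $b_k=2k-n$, note that $(b_k)$ is increasing with $b_0=-n<0$ and $b_J>0$ for $J>n/2$, check the radius of convergence is infinite from the ratio $A_{k+1}/A_k$, and invoke the lemma. That is the entire argument.

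Your route is more elaborate. The recasting $F_{\alpha,n}=2z\psi'-n\psi=2z^{n/2+1}G'$ with $G(z)=z^{-n/2}\psi(z)$, the endpoint asymptotics of $G$, and the derivative identity $F'_{\alpha,n}(z^*)=\tfrac{n\psi(z^*)}{2z^*}\bigl[2z^*-(\alpha+n-2)\bigr]$ at any root are all correct, but together they only reduce the problem to ruling out roots in the interval $\bigl(0,(\alpha+n-2)/2\bigr]$. At that point you appeal to Lemma~\ref{lem:Appuniqueroot} anyway---and since that lemma already yields the full corollary in one stroke, everything preceding it becomes redundant for \emph{this} statement. The two alternatives you sketch (series splitting at $k=\lceil n/2\rceil$; a Gronwall-type comparison of $\psi'/\psi$ with $n/(2z)$) are not carried out, so if your intention was to bypass the lemma, the proof is incomplete as written.

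That said, your derivative identity is genuinely valuable and buys something the paper's proof does not. Once Corollary~\ref{cor:B} is established, $F_{\alpha,n}<0$ on $[0,Z_{\alpha,n})$ forces $F'_{\alpha,n}(Z_{\alpha,n})\ge 0$; substituting into your formula gives $2Z_{\alpha,n}\ge \alpha+n-2$, i.e.\ $Z_{\alpha,n}\ge(\alpha+n-2)/2$. This is exactly Proposition~\ref{prop:Dinequality}, for which the paper gives a multi-page proof via iterated recursions and case analysis. Your identity collapses that argument to two lines, so although it is overkill for the corollary, it dramatically shortens the harder result that follows.
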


Recall that the root $Z_{\alpha,n}$ defines the optimal boundary to the optimal stopping problem for the $\alpha$-dimensional squared Bessel process with payoff $\phi(q) = q^{n/2}$ via $\tau^* = \inf \{ t \in (0,1) : Q_t \geq  Z(1-t) \}.$  
Moreover, the coefficient $E_1$ in \eqref{eq:generalg} can be computed using $Z^{n/2} = g(Z) = E_1 \psi(Z)$ to give
\begin{equation}
\label{eq:E1def}
    E_1 = \frac{Z_{\alpha,n}^{n/2}}{\sum_{k=0}^{\infty}A_{k}Z_{\alpha,n}^{k}}.
\end{equation}
Therefore, we can define a candidate function using \eqref{eq:Udef} and \eqref{eq:generalg} of the form
\begin{equation}\label{candidate_value_func_ge}
    U^{*}(t,q)=
    \begin{cases}
       \left(\frac{Z_{\alpha,n}^{n/2}}{\sum_{k=0}^{\infty}A_{k}Z_{\alpha,n}^{k}}\right)(1-t)^{\frac{n}{2}}\sum_{k=0}^{\infty}A_{k}\left(\frac{q}{{1-t}}\right)^{k},\quad
       &\text{if } q < Z_{\alpha,n} (1-t)\\
       q^{n/2},\quad &\text{if } q \geq Z_{\alpha,n} (1-t) ,
    \end{cases}
\end{equation}
where $(A_{k})_{k \geq 0}$ are given by
\eqref{eq:defCk} 
and $Z_{\alpha,n}$ is the unique solution to \eqref{D_solu}.

Recall that $g(y) = E_1 \psi(y) = E_1 \sum_{k \geq 0} A_k y^k$ on $[0,Z]$.
\begin{lemma}
\label{lem:glowerbd}
For $z \in [0,Z]$ we have $g(z) \geq z^{n/2}$.
\end{lemma}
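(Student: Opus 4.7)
The plan is to reduce the inequality $g(z) \geq z^{n/2}$ to a monotonicity statement for the ratio $R(z) := \psi(z)/z^{n/2}$ on $(0,Z_{\alpha,n}]$, and then invoke Corollary~\ref{cor:B} to establish the monotonicity. Recall that on $[0,Z_{\alpha,n}]$ we have $g(y) = E_1 \psi(y)$ with $E_1 = Z_{\alpha,n}^{n/2}/\psi(Z_{\alpha,n})$, and since each coefficient $A_k$ in \eqref{eq:defCk} is strictly positive (as $\alpha, n > 0$), we have $\psi(Z_{\alpha,n}) > 0$ and hence $E_1 > 0$.

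First I would compute
\begin{equation*}
R'(z) = \frac{z\psi'(z) - \tfrac{n}{2}\psi(z)}{z^{n/2+1}},
\end{equation*}
and observe, using the power series expansion of $\psi$, that the numerator equals $\sum_{k \geq 0}(k - n/2)A_k z^k = \tfrac{1}{2} F_{\alpha,n}(z)$. By Corollary~\ref{cor:B}, $F_{\alpha,n}(z) < 0$ on $[0,Z_{\alpha,n})$, so $R'(z) < 0$ on $(0,Z_{\alpha,n})$, meaning $R$ is strictly decreasing on $(0,Z_{\alpha,n}]$.

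Consequently, $R(z) \geq R(Z_{\alpha,n}) = \psi(Z_{\alpha,n})/Z_{\alpha,n}^{n/2}$ for $z \in (0, Z_{\alpha,n}]$, which rearranges to $\psi(z) \geq (\psi(Z_{\alpha,n})/Z_{\alpha,n}^{n/2})\, z^{n/2}$. Multiplying through by $E_1 > 0$ gives $g(z) = E_1 \psi(z) \geq z^{n/2}$ on $(0,Z_{\alpha,n}]$. The endpoint $z=0$ is handled separately: $g(0) = E_1 \psi(0) = E_1 > 0 = 0^{n/2}$.

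The main obstacle — establishing the sign of $R'$ — is resolved immediately by Corollary~\ref{cor:B}, so once the correct auxiliary ratio is identified the argument is essentially mechanical. An alternative route would be to set $h(z) = g(z) - z^{n/2}$ and exploit the tangency conditions $h(Z_{\alpha,n}) = h'(Z_{\alpha,n}) = 0$ together with the ODE \eqref{ode_ge}, but the ratio method avoids having to analyse the sign of $h''$, which is complicated by the fact that $z^{n/2}$ switches between concavity and convexity depending on whether $n < 2$ or $n > 2$.
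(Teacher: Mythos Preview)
Your proof is correct and is essentially the same as the paper's. Both arguments use Corollary~\ref{cor:B} to show that $z\psi'(z)-\tfrac{n}{2}\psi(z)=\tfrac{1}{2}F_{\alpha,n}(z)<0$ on $[0,Z)$ and conclude that $g(z)/z^{n/2}$ is decreasing on $(0,Z]$; you do this by differentiating the ratio $R=\psi/z^{n/2}$ directly, while the paper integrates the equivalent logarithmic-derivative inequality $g'/g<\tfrac{n}{2y}$ from $z$ to $Z$.
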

\begin{proof} 
We have that $2y g'(y) - ng(y) = \frac{E_1}{2}F_{\alpha,n}(y) < 0$ on $[0,Z)$. Then, for $z \in (0,Z)$ we have $\ln \frac{g(Z)}{g(z)} = \int_z^Z \frac{g'(y)}{g(y)} dy < \frac{n}{2} \int_z^Z \frac{1}{y}dy = \ln \frac{Z^{n/2}}{z^{n/2}}$ and then using $g(Z) = Z^{n/2}$ we find $g(z) \geq z^{n/2}$ on $[0,Z]$. 
\end{proof}

The next result is important for the main theorem. However, the proof is long and intricate and so it is postponed
to the appendix. Note
that when $\alpha+n \leq 2$ there is nothing to prove. 

\begin{prop}
\label{prop:Dinequality}
    For every $\alpha > 0$ and $n>0$ the root  $Z_{\alpha,n} \in (0,\infty)$ to
    $F_{\alpha,n}(\cdot)=0$ is such that 
    $Z_{\alpha,n}\geq \frac{\alpha+n-2}{2}$. 
\end{prop}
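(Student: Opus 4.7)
The plan is to combine the lower bound $g(y) \geq y^{n/2}$ from Lemma~\ref{lem:glowerbd} with the value-matching and smooth-fit conditions satisfied at $Z = Z_{\alpha,n}$ and the ODE \eqref{ode_ge} itself, to obtain a second-order pointwise inequality at $Z$ that translates directly into the desired bound.

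First I would set $h(y) := g(y) - y^{n/2}$. By Lemma~\ref{lem:glowerbd}, $h \geq 0$ on $[0,Z]$. Value matching gives $h(Z)=0$, and the smooth-fit condition (the defining property of $Z$ via $F_{\alpha,n}(Z)=0$) gives $h'(Z)=0$. The coefficients $A_k$ in \eqref{eq:defCk} decay super-exponentially (by Stirling, $\Gamma(k+n/2)/\Gamma(k+\alpha/2)$ grows only polynomially while $1/(2^k k!)$ kills everything), so $\psi$ is entire and $g$ is $C^2$ at $Z$; combined with $Z>0$ from Corollary~\ref{cor:B}, $y^{n/2}$ is also smooth at $Z$. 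Taylor expanding $h$ at $Z$ and using $h(Z)=h'(Z)=0$ yields
\begin{equation*}
h(y) \;=\; \tfrac{1}{2} h''(Z) (y - Z)^2 + o\big((y-Z)^2\big), \qquad y \to Z.
\end{equation*}
Since $h \geq 0$ on a left neighbourhood of $Z$ inside $[0,Z]$, this forces $h''(Z) \geq 0$, that is,
\begin{equation*}
g''(Z) \;\geq\; \tfrac{n(n-2)}{4}\, Z^{n/2-2}.
\end{equation*}

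Next I would evaluate the ODE \eqref{ode_ge} at $y = Z$ and insert the boundary values $g(Z)=Z^{n/2}$ and $g'(Z)=\tfrac{n}{2}Z^{n/2-1}$ to compute $g''(Z)$ explicitly:
\begin{equation*}
4 Z g''(Z) \;=\; n\, g(Z) - 2(\alpha-Z)\, g'(Z) \;=\; nZ^{n/2} - n(\alpha-Z)Z^{n/2-1} \;=\; n(2Z-\alpha)\, Z^{n/2-1},
\end{equation*}
so $g''(Z) = \tfrac{n(2Z-\alpha)}{4} Z^{n/2-2}$. Equating this identity with the Taylor-expansion inequality and cancelling the strictly positive factor $\tfrac{n}{4}Z^{n/2-2}$ (using $n>0$ and $Z>0$) gives $2Z - \alpha \geq n - 2$, which is the bound claimed. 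The residual case $\alpha + n \leq 2$ is automatic because the right-hand side of the target inequality is then non-positive while $Z_{\alpha,n}>0$.

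The main obstacle is really conceptual rather than computational: one has to notice that the one-sided inequality $h \geq 0$, combined with the \emph{simultaneous} vanishing of $h$ and $h'$ at $Z$ forced by value matching and smooth fit, automatically promotes itself to a second-order sign condition, which the ODE then converts into an algebraic inequality in $Z$ and $\alpha$. The only delicate regularity point is $C^2$ smoothness of $y^{n/2}$ at $Z$, which follows from $Z>0$.
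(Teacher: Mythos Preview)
Your proof is correct and substantially shorter than the paper's. The paper attacks the inequality directly at the level of the power series: it shows $F_{\alpha,n}\bigl(\tfrac{\alpha+n-2}{2}\bigr) < 0$ whenever $\alpha+n>2$ by introducing an auxiliary quantity $\Lambda_{D,B,\Delta}$, deriving an iteration $(D_r,B_r)\mapsto(D_{r+1},B_{r+1})$, and proving by case analysis on the sign of $\alpha-n\pm 2$ and induction on $r$ that eventually both $D_r<0$ and $B_r<0$, forcing every term in the series to be non-positive. This occupies several pages and requires a separate sub-argument (with a table of explicit polynomials up to degree~7) for the case $\alpha<n-2$.

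Your route is genuinely different. You use the free-boundary structure that the paper has already set up but does not exploit for this purpose: Lemma~\ref{lem:glowerbd} (which relies only on Corollary~\ref{cor:B}, so there is no circularity) gives $h=g-y^{n/2}\geq 0$ on $[0,Z]$; value matching and smooth fit force $h(Z)=h'(Z)=0$; hence $h''(Z)\geq 0$, and evaluating the ODE at $Z$ converts this into $2Z-\alpha\geq n-2$. This replaces the entire iterative series argument by a two-line pointwise computation, and it makes transparent why the threshold $\tfrac{\alpha+n-2}{2}$ arises: it is exactly where the generator applied to the payoff $q^{n/2}$ changes sign, which is the same quantity that appears as the drift $h(t,q)$ in the proof of Theorem~\ref{thm:BesQ}. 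The paper's approach, by contrast, is self-contained at the level of the coefficients $A_k$ and does not rely on Lemma~\ref{lem:glowerbd}, but that independence comes at a high cost in length.
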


\begin{theorem}
    \label{thm:BesQ}
    The candidate value function $U^{*}(t,q)$ given in \eqref{candidate_value_func_ge} coincides with the value
    function defined in
    \eqref{value_general}.
    Moreover, the associated stopping time $\tau^*$ given by
    \begin{equation*}
        \tau^{*}=\inf\{s\geq t: Q_{s}\geq z(s) \},
    \end{equation*}
    where $z(s)=Z_{\alpha,n}(1-s)$ and $Z_{\alpha,n}$ is the root given in Corollary~\ref{cor:B}, is 
    optimal.
\end{theorem}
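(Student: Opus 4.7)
The proof will mirror closely the structure used for Theorem~\ref{thm:BE}, working with the squared Bessel bridge $Q$ instead of the Brownian excursion. The plan is to fix $t_0 \in [0,1)$ and $q \geq 0$, set $M_t = U^{*}(t,Q_t)$, and establish two inequalities: first $U(t_0,q) \leq U^{*}(t_0,q)$ by showing $M$ is a supermartingale so that optional stopping combined with $U^{*}(t,q) \geq q^{n/2}$ yields the upper bound, and second $U(t_0,q) \geq U^{*}(t_0,q)$ by showing that $M^{\tau^{*}}$ is a genuine martingale and that $U^{*}(\tau^{*},Q_{\tau^{*}}) = Q_{\tau^{*}}^{n/2}$ almost surely by the value matching condition.

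For the upper bound the application of It\^o's formula to $U^{*}(t,Q_t)$ together with the SDE for $Q$ gives
\begin{equation*}
  dM_t = \bigl[ U^{*}_t + (\alpha - \tfrac{2Q_t}{1-t}) U^{*}_q + 2 Q_t U^{*}_{qq} \bigr]dt + 2\sqrt{Q_t}\, U^{*}_q\, dW_t.
\end{equation*}
On the continuation region the bracketed drift vanishes because $g$ solves \eqref{ode_ge} and the ansatz \eqref{eq:Udef} converts this ODE into \eqref{PDE_general}. On the stopping region $U^{*}(t,q)=q^{n/2}$ and a direct computation gives
\begin{equation*}
  U^{*}_t + (\alpha - \tfrac{2q}{1-t}) U^{*}_q + 2q U^{*}_{qq} = \tfrac{n}{2} q^{n/2-1}\bigl[(\alpha+n-2) - \tfrac{2q}{1-t}\bigr].
\end{equation*}
Here is where Proposition~\ref{prop:Dinequality} becomes essential: on the stopping region $q \geq Z_{\alpha,n}(1-t)$, so $\tfrac{2q}{1-t} \geq 2 Z_{\alpha,n} \geq \alpha+n-2$, and hence the drift is non-positive. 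This gives that the finite-variation part of $M$ is decreasing. Combined with Lemma~\ref{lem:glowerbd} (which gives $U^{*}(t,q) \geq q^{n/2}$ on the continuation region as well), optional stopping on the localised supermartingale yields $\mathbb{E}_{t_0,q}[Q_\tau^{n/2}] \leq \mathbb{E}_{t_0,q}[U^{*}(\tau,Q_\tau)] \leq U^{*}(t_0,q)$ for any stopping time $\tau \in [t_0,1]$, hence $U(t_0,q) \leq U^{*}(t_0,q)$.

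For the lower bound, on $\{X_{t_0} \geq \sqrt{z(t_0)}\}$ we take $\tau = t_0$ and equality holds directly. Otherwise we use $\tau^{*}$: on $[t_0,\tau^{*}]$ the process $Q$ stays in the closure of the continuation region, so the drift of $M^{\tau^{*}}$ is identically zero and $M^{\tau^{*}}$ is a local martingale. A uniform integrability argument (or a bounded-stopping argument at $\tau^{*}\wedge(1-\varepsilon)$ followed by passing $\varepsilon \downarrow 0$ and using $Q_{\tau^{*}} \leq Z_{\alpha,n}$) upgrades this to a true martingale. Then $U^{*}(t_0,q) = \mathbb{E}_{t_0,q}[U^{*}(\tau^{*},Q_{\tau^{*}})] = \mathbb{E}_{t_0,q}[Q_{\tau^{*}}^{n/2}] \leq U(t_0,q)$, using value matching $g(Z_{\alpha,n})=Z_{\alpha,n}^{n/2}$ at the boundary.

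The main obstacle, beyond bookkeeping, is the verification that the stochastic integral $\int 2\sqrt{Q_t}\,U^{*}_q(t,Q_t)\,dW_t$ is a true martingale, which is needed to apply optional stopping without appealing to Fatou in a way that would only give one of the two inequalities. In the stopping region $U^{*}_q = \tfrac{n}{2}q^{n/2-1}$ so the integrand is $n Q_t^{(n-1)/2}$; in the continuation region the analyticity of $\psi$ and the bound $0 \leq q/(1-t) \leq Z_{\alpha,n}$ give that $g'$ is bounded on $[0,Z_{\alpha,n}]$, so $\sqrt{Q_t}\,U^{*}_q$ is controlled by $(1-t)^{(n-1)/2}$ times a constant. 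Localising by $\tau_m = \inf\{t : Q_t \geq m\} \wedge (1-1/m)$, using boundedness of the integrand up to the localising time, and then passing to the limit using dominated convergence (with the power-law bound above plus known moment estimates for the squared Bessel bridge) gives the required martingale property. A secondary subtlety for $\alpha \in (0,2)$ is that $Q$ visits zero; but $U^{*}$ and its derivatives are smooth on $(0,1)\times[0,\infty)$ by construction (with $g'(0)$ and $g''(0)$ finite), so It\^o's formula applies without extra care at $Q=0$.
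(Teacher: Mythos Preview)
Your argument follows the same two-inequality structure as the paper, and the essential ingredients (the drift computation on the stopping region, Proposition~\ref{prop:Dinequality}, and Lemma~\ref{lem:glowerbd}) are used in the same way. The only real divergence is in how the supermartingale property of $M=U^{*}(\cdot,Q_\cdot)$ is obtained for the upper bound. You propose localisation plus moment estimates for $Q$ to upgrade the stochastic integral to a true martingale; the paper instead observes that since $M\geq 0$ and the finite-variation part $A$ is decreasing with $A_{t_0}=0$, the local-martingale part $N=M-U^{*}(t_0,Q_{t_0})-A$ is bounded below by the constant $-U^{*}(t_0,Q_{t_0})$ and is therefore already a supermartingale, whence so is $M$. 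This avoids any appeal to moment bounds on $Q$ and handles the stopping region (where the integrand $nQ_t^{(n-1)/2}$ is unbounded for $n>1$) with no extra work; note also that for the upper bound alone you never actually need the true-martingale property, since Fatou applied after localisation (using $M\geq 0$) already delivers $\mathbb{E}_{t_0,q}[M_\tau]\leq U^{*}(t_0,q)$. For the lower bound the two proofs coincide: on $[t_0,\tau^{*}]$ the process stays in the continuation region, where $2\sqrt{Q_t}\,U^{*}_q(t,Q_t)=2(1-t)^{(n-1)/2}\sqrt{y}\,g'(y)$ with $y=Q_t/(1-t)\in[0,Z_{\alpha,n}]$, and boundedness of $\sqrt{y}\,g'(y)$ on that interval makes the stopped stochastic integral an $L^2$ martingale for every $n>0$.
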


\begin{proof}
The proof follows the proof of Theorem~\ref{thm:BE} and here we only describe the necessary changes.

Fix $t_0$ and for $t \geq t_0$ let $M_{t}=U^{*}(t,Q_{t})$. 
Recall that we chose a power series expansion such that $g$, $g'$, and $g''$ are all well defined at 0.
Hence $U^*_q$ is bounded on $[0,Z]$ and $U^*_{qq}$ exists on $[0,Z)$ so that we can apply It\^{o}'s formula.
We have
\begin{equation*}
    dM_{t}= h(t,Q_t)\mathbb{I}\{Q_{t}\geq z(t) \}dt +U_{q}^{*}(t,Q_{t}) 2 \sqrt{Q_t} dW_{t}
\end{equation*}
where
\begin{equation*}
  h(t,q)= n q^{\frac{n}{2}-1}\left( \frac{\alpha + n - 2}{2} -  \frac{q}{1-t}\right).
\end{equation*}
Since $Z_{\alpha,n} \geq \frac{\alpha+n-2}{2}$ we have that $h \leq 0$ on the stopping region $Q_t \geq
Z_{\alpha,n} (1-t)$.
It follows that $A = \{ A_t \}_{t_0 \leq t \leq 1}$ given by $A_{t}:=\int_{t_0}^{t}h(s,Q_{s})\mathbb{I}\{Q_{s}\geq
z(s)\}ds$ is a decreasing process, null at $t_0$.

Define the local martingale $N= \{ N_t \}_{t_0 \leq t \leq 1}$ by $N_{t}=\int_{t_0}^{t}U^{*}(s,Q_{s}) 2 \sqrt{Q_s} dW_{s}$. Then $M_t = U^*(t_0,Q_{t_0}) + A_t +
N_t$. Since $M \geq 0$ we have $N \geq -U^*(t_0,Q_{t_0})-A_t \geq -U^*(t_0,Q_{t_0})$ and therefore the local martingale $N$ is bounded
below. Hence $N$ is a supermartingale.
It follows that
$\mathbb{E}[M_{t}|\mathcal{F}_{s}]=U^*(t_0,Q_{t_0}) + \mathbb{E}[A_{t}+N_{t}|\mathcal{F}_{s}] \leq U^*(t_0,Q_{t_0}) + A_{s}+ N_{s}=M_{s}$
so that $M$ is a supermartingale. By Lemma~\ref{lem:glowerbd}  $g(z) \geq z^{n/2}$ on $[0,Z]$ and hence $U^*(t,Q_t) \geq Q_t^{n/2}$ on $[0,\infty)$ and it follows that
for any stopping time $\tau$ with $\tau \geq t_0$,
\[ \mathbb{E}_{t_0,x}[Q_\tau^{n/2}] \leq \mathbb{E}_{t_0,x} [U^*(\tau,Q_\tau)] \leq M_{t_0} = U^*(t_0,Q_{t_0}) .\]
Thus $U(t_0,q) \leq U^*(t_0,q)$.

The remaining details which show that $M$ is a martingale for $t_0 \leq t \leq \tau^*$ and hence that $U(t_0,q) \geq U^*(t_0,q)$ follow exactly as in Theorem~\ref{thm:BE} using that $\sqrt{y}g'(y)$ is bounded on $[0,Z]$ for our chosen $g$. 

\end{proof}

\appendix
\section{Proofs of Key Results}

\begin{proof}[Proof of Lemma~\ref{lemma_f}]
Recall that $f$ is given by $f(y)= \frac{B}{y} \int_0^y e^{\frac{t^2}{2}} dt$. By l'H\^opital's rule we have $f(0)=B$. Also $f'(y)= \frac{1}{y} (Be^{y^2/2} - f(y))$ and again by l'H\^opital's rule $f'(0)= - f'(0)$ so that $f'(0)=0$.

Also, $f$ is the solution to $yf''(y)+(2-y^{2})f'(y)-yf(y)=0$ in $[0,C]$ with $f(C)=C$ and $f'(C)=1$. Then, in $[0,C]$ we have
 \begin{equation}
 \label{eq:f''}
     f''(y)=\frac{yf(y)-(2-y^{2})f'(y)}{y}.
 \end{equation}
 For $y>0$ we have $\left(1-\frac{y^{2}}{2}-\frac{y^{4}}{2}\right)e^{\frac{y^{2}}{2}}<e^{\frac{y^{2}}{2}}$.
 Integrating we find $$\frac{y(2-y^{2})}{2}e^{\frac{y^{2}}{2}} =\int_0^y \left( 1 - \frac{t^2}{2} - \frac{t^4}{2} \right)
e^{\frac{t^{2}}{2}} dt <\int_{0}^{y}e^{\frac{t^{2}}{2}}dt.$$ Then, since $f(y)= \frac{B}{y} \int_0^y e^{\frac{t^2}{2}} dt$, we have $B(2-y^2) e^{\frac{y^2}{2}} < 2 f(y)$.
Hence $(2-y^2)f'(y) = (2-y^2) \left( \frac{B}{y} e^{\frac{y^2}{2}} - \frac{f(y)}{y} \right) < 2 \frac{f(y)}{y} - (2-y^2) \frac{f(y)}{y} = y f(y)$. Substituting this into \eqref{eq:f''} we find that $f''>0$ or equivalently $f$ is convex on $[0,C]$.  

The fact that $f(y) \geq y$ then follows from the fact that $f$ is convex and hence $f'(y) \leq f'(C)=1$ on $[0,C]$ together with $f(C)=C$.
\end{proof}

\begin{lemma}
\label{lem:Appuniqueroot}
Let $h:[0,\infty)\to \mathbb{R}$ be given by $h(y)= \sum_{j \geq 0} a_j b_j y^j$ where $(a_j)_{0 \leq j < \infty}$ and $(b_j)_{0 \leq j < \infty}$ are such that $a_j>0$ and $b_{j+1} \geq b_j$ with $b_0<0$ and $b_J>0$ for some $J<\infty$. Moreover, suppose $h$ has an infinite radius of convergence.

Then $h$ has a unique positive root $y_*$ in $(0,\infty)$ and $h(y)<0$ if and only if $y \in [0,y_*)$.
\end{lemma}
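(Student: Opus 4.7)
My plan is to compare $h$ against the auxiliary entire function $p(y):=\sum_{j\geq 0}a_j y^j$ and show that the ratio $h/p$ is strictly increasing on $(0,\infty)$, which will deliver both uniqueness and the sign statement in one stroke. First I would verify that $p$ has infinite radius of convergence: since $b_j$ is non-decreasing with $b_J>0$, we have $b_j\geq b_J>0$ for $j\geq J$, so $a_j\leq (a_jb_j)/b_J$ for such $j$, and absolute convergence of $\sum a_j b_j y^j$ for every $y$ transfers to $\sum a_j y^j$. In particular $p>0$ on $[0,\infty)$ since every $a_j>0$.

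Next I would compute $h'p-hp'$ by the standard symmetrization for products of power series:
\begin{equation*}
h'(y)p(y)-h(y)p'(y)=\sum_{j,k\geq 0}a_ja_k(j-k)b_j\, y^{j+k-1}=\tfrac{1}{2}\sum_{j,k\geq 0}a_ja_k(j-k)(b_j-b_k)\,y^{j+k-1}.
\end{equation*}
Because $(b_j)$ is non-decreasing, the summand $(j-k)(b_j-b_k)$ is non-negative for every pair $(j,k)$, and the two pairs $(0,J)$ and $(J,0)$ together contribute $J\,a_0a_J(b_J-b_0)\,y^{J-1}>0$ strictly for $y>0$. Hence $(h/p)'(y)>0$ on $(0,\infty)$, so $h/p$ is strictly increasing there.

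Then I would show that $h$ actually takes a positive value. Let $k^{*}$ be the smallest index with $b_{k^{*}}\geq 0$; since $b_0<0$ we have $k^{*}\geq 1$, and since $b_J>0$ we have $k^{*}\leq J$. Split
\begin{equation*}
h(y)=\underbrace{\sum_{j<k^{*}}a_jb_j y^j}_{=:h_-(y)}+\underbrace{\sum_{j\geq k^{*}}a_jb_j y^j}_{=:h_+(y)},
\end{equation*}
where $h_-$ is a polynomial of degree at most $k^{*}-1$ with every coefficient negative, and $h_+$ is non-negative on $[0,\infty)$ and dominates the term $a_Jb_J y^J$ with $J\geq k^{*}$. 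For $y$ large enough this $y^J$ term swamps the polynomial $|h_-(y)|$ of strictly smaller degree, giving $h(y)>0$.

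Finally I would assemble the conclusion. Since $p>0$, the ratio $r:=h/p$ is continuous and strictly increasing on $[0,\infty)$ with $r(0)=b_0<0$ and $r(y)>0$ for large $y$; hence $r$ has a unique root $y_{*}\in(0,\infty)$, and $r<0$ on $[0,y_{*})$ while $r>0$ on $(y_{*},\infty)$. Multiplying through by the strictly positive function $p$ transfers this sign pattern to $h$ itself, proving the lemma. The only subtle point is recognising that the innocuous-looking comparison function $p(y)=\sum a_j y^j$ is the right one to divide out; once that is in hand the symmetrization identity makes the monotonicity of $h/p$ essentially immediate, and there is no serious obstacle elsewhere.
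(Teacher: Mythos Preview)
Your proof is correct and takes a genuinely different route from the paper's. The paper argues directly: having fixed a root $y_*$, it picks the index $J_0$ where $(b_j)$ first becomes non-negative and, for $y<y_*$, writes $h(y)=(y/y_*)^{J_0}\bigl[\sum_{j<J_0}a_jb_j(y/y_*)^{j-J_0}y_*^{j}+\sum_{j\geq J_0}a_jb_j(y/y_*)^{j-J_0}y_*^{j}\bigr]$, so that inside the bracket each negative term is weighted by $(y/y_*)^{j-J_0}\geq 1$ and each non-negative term by $(y/y_*)^{j-J_0}\leq 1$; comparing termwise with $h(y_*)=0$ forces $h(y)<0$. Your approach instead introduces the auxiliary entire function $p(y)=\sum a_j y^j$ and establishes $(h/p)'>0$ via the symmetrization $h'p-hp'=\tfrac{1}{2}\sum_{j,k}a_ja_k(j-k)(b_j-b_k)y^{j+k-1}$, which is essentially a Chebyshev rearrangement inequality. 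The paper's trick is shorter and needs no auxiliary object, but is somewhat ad hoc; your argument requires a little more setup (verifying $p$ is entire from the convergence of $h$) but delivers the sharper statement that $h/p$ is strictly increasing on $(0,\infty)$, and the mechanism generalizes cleanly to other situations with monotone coefficient sequences.
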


\begin{proof}
Since $h(y)$ is a power series, it is continuous and differentiable within the radius of convergence, for details, see \cite[9.4.12 Differentiation Theorem]{Bartle_Sherbert_2011}. 
Note that $h(0)=b_{0}a_{0}<0$, and $\lim_{y\to\infty} h(y) > \lim_{y \rightarrow \infty} \sum_{j=0}^{J} a_j b_{j} y^j =\infty$.  Hence, by the Intermediate Value Theorem, there exists at least one $y_{*}$ such that $h(y_{*})=0$. 

Now we want to show the uniqueness. Let $y_*$ be a root of $h$. Since $(b_{j})_{j \geq 1}$ is monotonically increasing with $b_{0}<0$ and $b_J>0$ there exists $J_0>0$ such that for every $j<J_0$ we have $b_{j}<0$, and for every $j\geq J_0$ we have $b_{j}\geq 0$. 
Then, for $y<y_*$, 
\begin{eqnarray*}
    h(y)&=&\sum_{j=0}^{J_0-1}a_{j}b_{j}\left(\frac{y}{y_*}\right)^{j}y_*^{j}+ \sum_{j=J_0}^{\infty}a_{j}b_{j}\left(\frac{y}{y_*}\right)^{j}y_*^{j}\\
    &=&\bigg(\frac{y}{y_*}\bigg)^{J_0}\bigg[\sum_{j=0}^{J_0-1}a_{j}b_{j}\bigg(\frac{y}{y_*}\bigg)^{j-J_0}y_{*}^{j}+\sum_{j=J_0}^{\infty}a_{j}b_{j}\bigg(\frac{y}{y_{*}}\bigg)^{j-J_0}y_*^{j}\bigg]\\
    &<&\bigg(\frac{y}{y_{*}}\bigg)^{J_0}\left[\sum_{j=0}^{J_0-1}a_{j}b_{j}y_{*}^{j}+\sum_{j=J_0}^{\infty}a_{j}b_{j}y_{*}^{j}\right]\\
    &=& 0.
\end{eqnarray*}
Hence, the root $y_*$ of $h$ must be unique and $h<0$ on $[0,y_*)$. 

\end{proof}

\begin{proof}[Proof of Corollary~\ref{cor:B}]
Let $(a_j)_{j \geq 1}$ and $(b_j)_{j \geq 1}$ be given by $a_j = A_j$ and $b_j = (2j-n)$. Then $a_j>0$ for all $j$ and $(b_j)_{j \geq 0}$ is increasing with $b_0<0$ and $b_J>0$ for $J>n/2$.

Since $\lim_{k\to\infty}\frac{[2(k+1)-n]A_{k+1}}{(2k-n)A_{k}}=0$, we conclude that $F_{\alpha,n}$ has an infinite radius of convergence, for details see \cite[p. 269-270]{Bartle_Sherbert_2011}. 
Then by Lemma \ref{lem:Appuniqueroot} $F_{\alpha,n}$ has a unique root. Moreover, from the derivation of the optimal boundary, we know $Z_{\alpha,n}$ is a solution to $y\psi'(y)=\frac{n}{2}\psi(y)$, and thus $Z_{\alpha,n}$ is the unique root of $F_{\alpha,n}$.
\end{proof}

Given the significance of Proposition~\ref{prop:Dinequality} we restate it here.
\begin{prop}
\label{prop:DinequalityA}
    For every $\alpha > 0$ and $n>0$ the root  $Z_{\alpha,n} \in (0,\infty)$ to
    $F_{\alpha,n}(\cdot)=0$ is such that 
    $Z_{\alpha,n}\geq \frac{\alpha+n-2}{2}$. 
\end{prop}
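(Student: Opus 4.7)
The plan is to exploit Lemma~\ref{lem:glowerbd}, which yields $g(y)=E_{1}\psi(y)\geq y^{n/2}$ on $[0,Z]$ with equality at $Z:=Z_{\alpha,n}$. The intuition is that $y=Z$ is a left-hand minimum of $r(y):=g(y)-y^{n/2}$ at which both $r$ and $r'$ vanish (by value matching and smooth fit), so that $r''(Z)\geq 0$. Combining this second-order condition with an explicit evaluation of $g''(Z)$ from the ODE \eqref{ode_ge} will pin down $Z$ in exactly the desired way.

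First I would read off $g''(Z)$ from \eqref{ode_ge} evaluated at $y=Z$: using $g(Z)=Z^{n/2}$ and $g'(Z)=\tfrac{n}{2}Z^{n/2-1}$, the ODE gives $4Zg''(Z)=ng(Z)-2(\alpha-Z)g'(Z)=nZ^{n/2-1}(2Z-\alpha)$, so
\[
g''(Z)=\frac{n(2Z-\alpha)}{4}\,Z^{n/2-2}.
\]

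Next, I would apply a one-sided second-order Taylor expansion of $r$ at $Z$: for $h>0$ small, Lemma~\ref{lem:glowerbd} gives
\[
0\leq r(Z-h)=\tfrac{1}{2}r''(Z)\,h^{2}+o(h^{2}),
\]
so $r''(Z)\geq 0$, which rearranges to $g''(Z)\geq \tfrac{n(n-2)}{4}Z^{n/2-2}$. Comparing with the formula above and cancelling the strictly positive factor $\tfrac{n}{4}Z^{n/2-2}$ (using $Z>0$ from Corollary~\ref{cor:B} and $n>0$) yields $2Z-\alpha\geq n-2$, i.e.\ $Z_{\alpha,n}\geq \tfrac{\alpha+n-2}{2}$.

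I do not anticipate a serious obstacle, because Lemma~\ref{lem:glowerbd} and smooth fit do most of the work once this structural observation is made. The only point to double-check is that Lemma~\ref{lem:glowerbd} is genuinely available as an independent input, which it is, since its proof uses only the sign statement $F_{\alpha,n}<0$ on $[0,Z)$ provided by Corollary~\ref{cor:B} and not Proposition~\ref{prop:DinequalityA}. The most plausible alternative, a direct attempt to show $F_{\alpha,n}\bigl(\tfrac{\alpha+n-2}{2}\bigr)\leq 0$ from the explicit coefficients \eqref{eq:defCk}, does not appear to admit a clean closed form and looks like it would be the real difficulty; routing through the second-derivative test at $Z$ sidesteps it entirely.
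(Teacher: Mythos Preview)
Your argument is correct and substantially simpler than the paper's own proof. The paper proceeds exactly along the route you dismissed as ``the real difficulty'': it reduces to showing $F_{\alpha,n}\bigl(\tfrac{\alpha+n-2}{2}\bigr)<0$ directly from the series, reparametrises via $\alpha=n+2+2\gamma$ (or, in a third case, $n=\alpha+2+2\delta$), introduces an auxiliary sum $\Lambda_{D,B,\Delta}$ admitting a recursive rewriting $\Lambda_{D,B,\Delta}=\Lambda_{\tilde D,\tilde B,\Delta+1}$, and then runs a three-case induction on $(D_r,B_r)$ to force both coefficients eventually negative. The third case ($\alpha<n-2$) even requires a hand-computed table out to $r=7$ before the induction can start.

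Your route bypasses all of this by noting that $r(y)=g(y)-y^{n/2}$ has a one-sided interior minimum at $Z$ with $r(Z)=r'(Z)=0$, so $r''(Z)\geq 0$; evaluating $g''(Z)$ from \eqref{ode_ge} then yields $2Z-\alpha\geq n-2$ in one line. The ingredients---$g=E_1\psi$ entire, $y^{n/2}$ smooth near $Z>0$, and Lemma~\ref{lem:glowerbd}---are all available without circularity, since Lemma~\ref{lem:glowerbd} rests only on Corollary~\ref{cor:B}. The paper's computation is self-contained at the level of the coefficients $A_k$ and would survive even if Lemma~\ref{lem:glowerbd} were absent, but given that the lemma \emph{is} already in hand, your argument is strictly preferable: it is shorter, case-free, and makes transparent \emph{why} the threshold $\tfrac{\alpha+n-2}{2}$ arises---it is exactly where the second-order contact condition between $g$ and the payoff becomes binding, equivalently where the drift $h(t,\cdot)$ in the proof of Theorem~\ref{thm:BesQ} changes sign.
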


\begin{proof}
Recall that 
$$F_{\alpha,n}(z)= \sum_{k\geq 0}(2k-n)A_{k}z^{k} \quad \text{where} \quad A_{k} =\frac{\Gamma(\frac{\alpha}{2})}{\Gamma(\frac{n}{2})}\frac{\Gamma(k+\frac{n}{2})}{\Gamma(k+\frac{\alpha}{2})2^{k}k!}. $$

It follows from Corollary~\ref{cor:B} that $F_{\alpha,n}$ has a unique positive root $Z_{\alpha,n}$ on $(0,\infty).$ Moreover, again by Corollary~\ref{cor:B}, $Z_{\alpha,n}>\frac{\alpha+n}{2}-1$ if and only if either $\frac{\alpha+n}{2}-1 \leq 0$ or $F_{\alpha,n}(\frac{\alpha+n}{2}-1)<0.$

If $\frac{\alpha + n}{2} - 1 \leq 0$ there is nothing to prove. 
So, suppose $\frac{\alpha + n}{2} - 1 > 0$

Write $\alpha = n + 2 + 2 \gamma$, where $\gamma>-2$.  Then $\frac{\alpha + n}{2} - 1 = n + \gamma > 0$. We have 
\[ F_{\alpha,n}(z) = \frac{\Gamma( \frac{n}{2} + 1 + \gamma)}{\Gamma(\frac{n}{2})} \sum_{k=0}^\infty \frac{(2k-n)}{2^k k!} \frac{\Gamma(k+\frac{n}{2})}{\Gamma(k+\frac{n}{2}+ 1 + \gamma)} z^k, \]
Let $H = \frac{\Gamma(\frac{n}{2})}{\Gamma(\frac{n}{2}+1+ \gamma)}F_{\alpha,n}(n+\gamma)$. It is sufficient to show that $H<0$. The idea is to find an equivalent infinite sum expression for $H$ in which all the terms in the sum are non-positive.

For constants $D$, $B$ and $\Delta$ with $\Delta>0$  define $\Lambda = \Lambda_{D,B,\Delta}$ by 
\[ \Lambda_{D,B,\Delta} = \sum_{k=0}^\infty \frac{(n+\gamma)^k}{2^k k!} \frac{\Gamma(k+\frac{n}{2})}{\Gamma(k+\frac{n}{2}+\Delta+1+\gamma)} \left[ \frac{D}{2^{\Delta-1}}k + \frac{B}{2^{\Delta}}n \right].  \] 
Note that a sufficient condition for $\Lambda<0$ is both $D<0$ and $B<0$.
Then,
\begin{eqnarray*}
\lefteqn{\Lambda_{D,B,\Delta}} \\
&= & \sum_{k=1}^\infty \frac{D}{2^{\Delta-1}}\frac{k}{2^k k!} \frac{\Gamma(k+\frac{n}{2})}{\Gamma(k+\frac{n}{2}+\Delta+1+\gamma)} (n + \gamma)^k \\
& &\hspace{+3cm}+ \sum_{k=0}^\infty \frac{B}{2^{\Delta}}\frac{n}{2^k k!} \frac{\Gamma(k+\frac{n}{2})}{\Gamma(k+\frac{n}{2}+\Delta+1+\gamma)} (n + \gamma)^k \\
&=& \sum_{j=0, j=k-1}^\infty  \frac{D}{2^{\Delta}} \frac{1}{2^j j!} \frac{\Gamma(j+\frac{n}{2}+1)}{\Gamma(j+\frac{n}{2}+ \Delta+2+\gamma)} (n + \gamma)^{j+1} \\
&&\hspace{+3cm}+ \sum_{j=0}^\infty \frac{B}{2^{\Delta}}\frac{n}{2^j j!} \frac{\Gamma(j+\frac{n}{2})}{\Gamma(j+\frac{n}{2}+ \Delta+1+\gamma)} (n + \gamma)^j \\
&= & \sum_{j=0}^\infty \frac{(n+\gamma)^j}{2^j j!} \frac{\Gamma(j+\frac{n}{2})}{\Gamma(j+\frac{n}{2}+ \Delta+2+\gamma)} \left[ \frac{D}{2^{\Delta}}(n + \gamma)(j+\frac{n}{2}) + \frac{B}{2^{\Delta}}n(j+\frac{n}{2}+ \Delta+1+\gamma)\right] \\
&=&  \Lambda_{\tilde{D},\tilde{B}, \Delta+1} 
\end{eqnarray*}
where
\begin{equation}
    \label{eq:tildeDB}
    \tilde{D}=(n+\gamma)D+Bn; \quad \mbox{and} \quad \tilde{B}=(n+\gamma)D+(n+2\Delta+2+2\gamma)B.
\end{equation}
Thus
\begin{equation}
\label{eq:iterate}
\Lambda_{D,B,\Delta} = \Lambda_{(n+\gamma)D+Bn,(n+\gamma)D+(n+2\Delta+2+2\gamma)B,\Delta+1} .
\end{equation}

We split the proof into three cases.

\noindent{\bf Case 1: $\alpha+n > 2$ and $|\alpha-n| \leq 2$.}
From the definition of $\Lambda$ we have that $H=\Lambda_{1,-1,0}$. Moreover,
from \eqref{eq:iterate} with $D=1$, $B=-1$ and $\Delta=0$ we deduce that $H=\Lambda_{\gamma,-(2+\gamma),1}$.
It follows that if $\gamma \leq 0$ and $-(2+ \gamma) \leq 0$ (equivalently $-2 \leq \gamma \leq 0$ or $|\alpha-n| \leq 2$) then 
$H < 0$. 

\noindent{\bf Case 2: $\alpha+n > 2$ and $\alpha> n+2$.}
In this case $\gamma>0$, and we cannot immediately conclude from $H=\Lambda_{\gamma,-(2+\gamma),1}$ that $H<0$. Instead, we repeat the rewriting of $H$ in terms of equivalent expressions for $\Lambda$:
\[ H = \Lambda_{1,-1,0} = \Lambda_{\gamma,-(2+\gamma),1} = \ldots = \Lambda_{D_r,B_r,r} = \ldots \]
where $D_2=\gamma^{2}-2n$ and $B_2=-(\gamma^{2}+8\gamma+2n+8)$ and more generally from \eqref{eq:tildeDB}
\begin{eqnarray}
    D_{r+1}&=&(n+\gamma)D_{r}+nB_{r}\label{iteration_d}\\
    B_{r+1}&=&(n+\gamma)D_{r}+(n+2r+2+2\gamma)B_{r}\label{iteration_b}.
\end{eqnarray}
We argue that for any $\gamma>0$ we can find $r$ large enough so that both $D_r <0$ and $B_r<0$. Then we can conclude that $H<0$.

Let $\sP_r$ be given by the statement
\[
\sP_r: \quad    D_{r}\leq\gamma^{r}, \quad \mbox{and} \quad B_{r}\leq-\gamma^{r}\left(1+\frac{2r}{\gamma}\right).
\]
It is immediate that $\sP_0$ is true. We prove that $\sP_r$ is true for all $r$ by induction.

Suppose that $\sP_k$ is true so that $D_k \leq \gamma^k$ and $B_k \leq -\gamma^k  \left( 1 + \frac{2k}{\gamma} \right)$. Then,
\begin{equation}
\label{eq:Diteration}
    D_{k+1}=(n+\gamma)D_{k}+nB_{k}
    \leq(n+\gamma)\gamma^{k}-\gamma^{k}\left( 1 + \frac{2k}{\gamma} \right)n
    =\gamma^{k+1} \left( 1 -\frac{2kn}{\gamma^2} \right)
    \leq \gamma^{k+1}
\end{equation}
and
\begin{eqnarray*}
    B_{k+1}&=&(n+\gamma)D_{k}+(n+2k+2+2\gamma)B_{k}\\
    &\leq&(n+\gamma)\gamma^{k}-(n+2k+2+2\gamma)\gamma^k  \left( 1 + \frac{2k}{\gamma} \right)\\
    &=&-\gamma^{k+1}-6k\gamma^{k}-2\gamma^{k}-\gamma^{k-1}(2kn+4k^{2}+4k)\\
    &\leq& 
    -\gamma^{k+1} \left( 1 + \frac{2(k+1)}{\gamma} \right).
\end{eqnarray*}
Hence $\sP_{k+1}$ is true, and by induction $\sP_r$ is true for all $r \geq 0$.

Now choose $r_0$ such that $r_0> \frac{\gamma^2}{2n}$. Then, from \eqref{eq:Diteration} we see that not only do we have that $D_{r_0+1} \leq \gamma^{r_0+1}$ but also from the penultimate expression that $D_{r_0+1} < 0$. Then
$H = \Lambda_{D_{r_0+1},B_{r_0+1},r_0+1} < 0$.

\noindent{\bf Case 3: $\alpha+n > 2$ and $\alpha < n-2$.}
    In this case we let $n =\alpha+2+2\delta$, where $\delta>0$. 
    This is a reparameterization of the problem in terms of $\alpha$ and $\delta$ (rather than $n$ and $\gamma$) where
    \begin{equation*}
        n+\gamma=\alpha+\delta, \quad \quad n=\alpha+2+2\delta. 
    \end{equation*}
    $H$ and $\Lambda$ can also be expressed in terms of $\alpha$ and $\delta$. Thinking of $\alpha>0$ as fixed and writing $(D_{\alpha,r},B_{\alpha,r})$ rather than $(D_r,B_r)$
    we find that 
\[ H = \Lambda_{1,-1,0} = \ldots \Lambda_{D_{\alpha,r},B_{\alpha,r},r} = \ldots \]
where now the iteration formulae in \eqref{iteration_d} and \eqref{iteration_b} are replaced by 
    \begin{eqnarray}
       D_{\alpha,r+1}&=&D_{\alpha,r}(\alpha+\delta)+B_{\alpha,r}(\alpha+2+2\delta)\label{iteration_D_Tilde}\\
       B_{\alpha,r+1}&=&D_{\alpha,r}(\alpha+\delta)+B_{\alpha,r}(2r+\alpha)\label{iteration_B_Tilde}. 
    \end{eqnarray}
subject to $D_{\alpha,0}=1$, $B_{\alpha,0}=-1$.
    The goal here is again to show that for each $\delta>0$ there exists $r\geq 1$, such that both $D_{\alpha,r}<0$ and $B_{\alpha,r}<0$, for if so $H<0$.   

First consider the case where $\alpha=0$. Then, \eqref{iteration_D_Tilde} and \eqref{iteration_B_Tilde} simplify to
\[ D_{0,r+1}=\delta D_{0,r}+2(1+\delta) B_{0,r}; \hspace{2cm} B_{0,r+1}=\delta D_{0,r}+2r B_{0,r}, \]
where $D_{0,0}=1$ and $B_{0,0}=-1$. Then
$D_{0,1}=-(\delta + 2)$ and $B_{0,1}= \delta$
and we can easily compute the next six iterations,
which are recorded in Table~\ref{tab:narrow_d_b}.  
\begin{table}[h!]
\centering
\resizebox{\textwidth}{!}{%
\begin{tabular}{|c|r|r|}
\hline
$j$ & $D_{0,j}$ \hspace{3cm} & $B_{0,j}$ \hspace{3cm} \\
\hline
2 & $\delta^{2}$ & $-\delta^{2}$ \\
3 & $-\delta^{3} - 2\delta^{2}$ & $\delta^{3} - 4\delta^{2}$ \\
4 & $\delta^{4} - 8\delta^{3} - 8\delta^{2}$ & $-\delta^{4} + 4\delta^{3} - 24\delta^{2}$ \\
5 & $-\delta^{5} - 2\delta^{4} - 48\delta^{3} - 48\delta^{2}$ & $\delta^{5} - 16\delta^{4} + 24\delta^{3} - 192\delta^{2}$ \\
6 & $\delta^{6} - 32\delta^{5} - 32\delta^{4} - 384\delta^{3} - 384\delta^{2}$ & $-\delta^{6} + 8\delta^{5} - 208\delta^{4} + 192\delta^{3} - 1920\delta^{2}$ \\
7 & $-\delta^{7} - 18\delta^{6} - 432\delta^{5} - 416\delta^{4} - 3840\delta^{3} - 3840\delta^{2}$ & $\delta^{7} - 44\delta^{6} + 64\delta^{5} - 2880\delta^{4} + 1920\delta^{3} - 23040\delta^{2}$ \\
\hline
\end{tabular}
}
\caption{Expressions for $D_{0,j}$ and $B_{0,j}$ for $2\leq j\leq 7$}
\label{tab:narrow_d_b}
\end{table}

Our goal is to find $r_0$ such that $D_{0,r_0} \leq 0$ and $B_{0,r_0} \leq 0$ (with at least one inequality being strict). Given the alternating nature of the leading terms, this is more subtle than in previous cases.

If $r$ is odd, let $\sQ_r$ be the statement
\[
\sQ_r: \quad    D_{0,r} \leq -\delta^{r}-2r\delta^{r-1} \quad \mbox{ and } \quad B_{0,r} \leq \delta^{r}-2r\delta^{r-1} .
\]
If $r$ is even, 
let $\sQ_r$ be the statement
\[
\sQ_r: \quad    D_{0,r}\leq -\delta^{r}-4r\delta^{r-1} \quad \mbox{ and }  \quad B_{0,r} \leq \delta^{r} .
\]
We observe that $\sQ_7$ holds. We now show that if $\sQ_r$ holds for $r$ odd then $\sQ_{r+1}$ holds, and if $\sQ_r$ holds for $r$ even then $\sQ_{r+1}$ holds. It will then follow by induction that $\sQ_r$ holds for all $r \geq 7$.

Suppose $r \geq 3$ is odd and $\sQ_r$ holds. Then  
     \begin{eqnarray*}
         D_{0,r+1} = \delta D_{0,r}+2(1+\delta) B_{0,r}
            &\leq & \delta(-\delta^{r}-2r\delta^{r-1})+2(1+\delta)(\delta^{r}-2r\delta^{r-1})\\
            &= &\delta^{r+1}-(6r-2)\delta^{r}-4r\delta^{r-1}\\
            &\leq &\delta^{r+1}-4(r+1)\delta^{r},
         \end{eqnarray*}
where the last inequality holds since $2r \geq 6$.     Moreover, 
\begin{eqnarray*}
            B_{0,r+1} =  \delta D_{0,r}+2r B_{0,r}
           &\leq& \delta(-\delta^{r}-2r\delta^{r-1})+2r(\delta^{r}-2r\delta^{r-1})\\
           &=&-\delta^{r+1}-4r^{2}\delta^{r-1}\\
           &\leq& -\delta^{r+1}.
     \end{eqnarray*}
Hence $\sQ_{r+1}$ holds. Now suppose $r \geq 2$ is even and $\sQ_r$ holds.
Then
     \begin{eqnarray*}         
           D_{0,r+1} = \delta D_{0,r}+2(1+\delta) B_{0,r}
          &\leq & \delta(\delta^{r}-4r\delta^{r-1})-2(1+\delta)\delta^{r}\\
          &=&-\delta^{r+1}-(4r+2)\delta^{r}\\
          &\leq& -\delta^{r+1}-2(r+1)\delta^{r},
     \end{eqnarray*}
     and
\begin{equation*}             
B_{0,r+1}= \delta D_{0,r}+2r B_{0,r}
\leq \delta(\delta^{r}-4r\delta^{r-1})-2r\delta^{r}
            =\delta^{r+1}-6r\delta^{r}
            \leq\delta^{r+1}-2(r+1)\delta^{r}.
     \end{equation*}
We conclude that $\sQ_{r+1}$ holds.

It follows by induction that $\sQ_r$ holds for all $r \geq 7$.
Given $\delta>0$ choose $r^* \geq 7$ with $r$ odd such that 
$2r^*>\delta$. Then $\sQ_{r^*}$ holds. In particular, $D_{0,r^*}<0$ holds and, by closer inspection of the statement of $\sQ_r$, also $B_{0,r^*}<0$. Hence $H<0$.

Finally, we extend the result that for all $\delta>0$ there exists $r^*$ such that $\Lambda_{D_{0,r^*},B_{0,r^*},r^*} < 0$ from the case $\alpha=0$ to $\alpha>0$.
Let $\sR_r$ be the statement
\[ \sR_r:   \quad \quad D_{\alpha,r}\leq D_{0,r},\quad \text{and} \quad B_{\alpha,r}\leq B_{0,r} \]
where $D_{\alpha,r}$ and $B_{\alpha,r}$ are given by the iterations in \eqref{iteration_D_Tilde} and \eqref{iteration_B_Tilde}.
Note that $\sR_0$ is true since 
$ D_{\alpha,0}= D_{0,0}=1$ and $ B_{\alpha,0}= B_{0,0}=-1$. Note also that $D_{0,r}+B_{0,r} \leq 0$ for all $r \geq 0$, by inspection for $r \leq 7$ and by the fact that $\sQ_r$ is true for all $r \geq 7$.

 Assume that $\sR_r$ holds for $r=k$. Then
    \begin{eqnarray*}
         D_{\alpha,k+1}&=& D_{\alpha,k}(\alpha+\delta)+ B_{\alpha,k}(\alpha+2+2\delta)\\
        &\leq & D_{0,k}(\alpha+\delta)+ B_{0,k}(\alpha+2+2\delta)\\
        &= &\alpha( D_{0,k}+ B_{0,k})+\delta D_{0,k}+(2+2\delta) B_{0,k}\\
        &= &\alpha( D_{0,k}+ B_{0,k})+ D_{0,k+1}\\
        &\leq &D_{0,k+1}.
    \end{eqnarray*}
     Similarly, since $\sR_k$ holds, 
    \begin{eqnarray*}
       B_{\alpha,k+1}&=& D_{\alpha,k}(\alpha+\delta)+ B_{\alpha,k}(2k+\alpha)\\
      &\leq&  D_{0,k}(\alpha+\delta)+ B_{0,k}(2k+\alpha)\\
      &=&\alpha( D_{0,k}+ B_{0,k})+\delta D_{0,k}+2k B_{0,k}\\
      &=&\alpha( D_{0,k}+ B_{0,k})+ B_{0,k+1}\\
      &\leq& B_{0,k+1}.
    \end{eqnarray*}
Hence $\sR_{k+1}$ holds, and we conclude that $\sR_r$ holds for all $r$. Recall that for all $\delta>0$ there exists $r^*$ with $r^* \geq 7$ with $r^*$ odd such that $2 r^*> \delta$,  and then $ D_{0,r^{*}}<0$ and $ B_{0,r^{*}}<0$. Then we also have $ D_{\alpha,r^{*}}\leq D_{0,r^{*}}<0$ and $ B_{\alpha,r^{*}}\leq B_{0,r^{*}}<0$. Hence, $H= \Lambda_{D_{\alpha,r^*},B_{\alpha,r^*},r^*}<0$ as required.
\end{proof}

\end{document}